\newtheorem{theorem}{Theorem}
\newtheorem*{main*}{Main Theorem}
\newtheorem{lemma}{Lemma}
\newtheorem{corollary}{Corollary}
\theoremstyle{definition}
\newtheorem*{def*}{Definition}
\newtheorem{example}{\textsc{Example}}
\newtheorem*{question*}{\textbf{Question}}
\theoremstyle{remark}
\newtheorem*{claim*}{\textsc{Claim}}
\DeclareMathSymbol{\widehatsym}{\mathord}{largesymbols}{"62}
\renewcommand{\bf}{\mathbf}
\renewcommand{\emptyset}{\varnothing}
\renewcommand{\rho}{\varrho}
\renewcommand{\ast}{\star}
\providecommand{\HHb}{\mathbf{H}}
\providecommand{\dd}{\mathsf{d}}
\providecommand{\DDc}{\mathcal{D}}
\providecommand{\HHc}{\mathcal{H}}
\providecommand{\LLc}{\mathscr{L}}
\providecommand{\NNb}{\mathbf{N}}
\providecommand{\PPc}{\mathcal{P}}
\providecommand{\RRb}{\mathbf{R}}
\providecommand{\VVc}{\mathcal{V}}
\providecommand{\ZZb}{\mathbf{Z}}
\renewcommand{\xi}{a}
\providecommand\dom{{\rm dom}}
\providecommand\llb{\llbracket}
\providecommand\rrb{\rrbracket}
\providecommand{\imag}{{\rm Im}}
\newcommand\pto{\mathrel{\ooalign{\hfil$\mapstochar$\hfil\hfil\cr$\to$\cr}}}
\newcommand{\fixed}[2][1]{%
  \begingroup
  \spaceskip=#1\fontdimen2\font minus \fontdimen4\font
  \xspaceskip=0pt\relax
  #2%
  \endgroup
}
\begin{document}
\title{Upper and lower densities have the \\ strong Darboux property}

\author{Paolo Leonetti}
\address{Universit\`a ``Luigi Bocconi'' \\ via Sarfatti 25, 20136 Milano, Italy}
\email{leonetti.paolo@gmail.com}

\author{Salvatore Tringali}
\address{Department of Mathematics, Texas A\&M University at Qatar \\ PO Box 23874 Doha, Qatar}
\curraddr{Institute for Mathematics and Scientific Computing, University of Graz | Heinrichstr. 36, 8010 Graz, Austria}
\email{salvatore.tringali@uni-graz.at}
\urladdr{http://imsc.uni-graz.at/tringali}

\subjclass[2010]{Primary 11B05, 28A10. Secondary 39B62, 60B99}
%
%
\keywords{Asymptotic (or natural) density, Banach (or uniform) density, Darboux (or intermediate value) property, set functions, subadditive functions, upper and lower densities (and quasi-densities).}
\begin{abstract}
\noindent{} Let $\mathcal{P}({\bf N})$ be the power set of $\bf N$. An upper density (on $\bf N$) is a non\-decreasing and subadditive function $\mu^\ast: \mathcal{P}({\bf N})\to\bf R$ such that $\mu^\ast({\bf N}) = 1$ and $\mu^\ast(k \cdot X + h) = \frac{1}{k} \mu^\ast(X)$ for all $X \subseteq \bf N$ and $h,k \in {\bf N}^+$, where $k \cdot X + h := \{kx + h: x \in X\}$.

The upper asymptotic, upper Banach, upper logarithmic, upper Buck, upper P\'olya, and upper analytic densities are examples of upper densities.

We show that every upper density $\mu^\ast$ has the strong Darboux property, and so does the associated lower density, where a function $f: \mathcal P({\bf N}) \to \bf R$ is said to have the strong Darboux property if, whenever $X \subseteq Y \subseteq \bf N$ and $a \in [f(X),f(Y)]$, there is a set $A$ such that $X\subseteq A\subseteq Y$ and $f(A)=a$. In fact, we prove the above under the assumption that the monotonicity of $\mu^\ast$ is relaxed to the weaker condition that $\mu^\ast(X) \le 1$ for every $X \subseteq \bf N$.
\end{abstract}
\maketitle
\thispagestyle{empty}

\section{Introduction}\label{sec:introduction}
Recently, the authors have introduced, and studied some fundamental aspects of, an ``axiomatic theory of densities'' \cite{LT},
tailored to the integers
and built around the properties of certain (set) functions called ``upper [quasi-]densities''.
These functions are also the subject of the present manuscript, as our main goal is to prove that they satisfy a kind of ``intermediate value property'' we refer to as the strong Darboux property, see \S{} \ref{sec:darboux} below.

Throughout, we will let $\HHb$ be either $\ZZb$, $\NNb$, or $\NNb^+$. A function $\mu^\ast: \mathcal{P}(\HHb) \to \RRb$ is said to be an \emph{upper density} (on $\HHb$) if, for all $X,Y \subseteq \HHb$ and $h,k \in \NNb^+$, it holds:
\begin{enumerate}[label={\rm (\textsc{f}\arabic{*})}]
\item\label{item:F1} $\mu^\ast(\HHb) = 1$;
\item\label{item:F2} $\mu^\star(X) \le \mu^\star(Y)$ for $X\subseteq Y$;
\item\label{item:F3} $\mu^\ast(X \cup Y) \le \mu^\ast(X) + \mu^\ast(Y)$;
\item\label{item:F4} $\mu^\ast(k\cdot X) = \frac{1}{k}\mu^\ast(X)$, where $k\cdot X:=\{kx\colon x \in X\}$;
\item\label{item:F5} $\mu^\ast(X+h) = \mu^\ast(X)$, where $X + h := \{x+h: x \in X\}$.
\end{enumerate}
We regard $\HHb$ as a sort of ``parameter'' allowing for different scenarios and some flexibility, though it makes almost no difference for this paper to focus on the case $\HHb = \NNb$.

For future reference, notice that \ref{item:F4} and \ref{item:F5} together are equivalent to the following:
\begin{enumerate}[label={\rm (\textsc{f}4${}^\flat$)}]
\item\label{item:F4b} $\mu^\ast(k \cdot X+h) = \frac{1}{k}\mu^\ast(X)$ for all $X \subseteq \HHb$ and $h, k \in \NNb^+$.
\end{enumerate}
To ease the exposition, we will occasionally say that $\mu^\ast$ is:
monotone if it satisfies \ref{item:F2};
subadditive if it satisfies \ref{item:F3}; and
(finitely) additive if $\mu^\ast(X \cup Y) = \mu^\ast(X) + \mu^\ast(Y)$ whenever $X, Y \subseteq \HHb$ and $X \cap Y$ is empty.
On the other hand, we call $\mu^\ast$ an \textit{upper quasi-density} (on $\HHb$) if it satisfies \ref{item:F1}, \ref{item:F3}, \ref{item:F4b}, and the next condition, which is clearly implied by \ref{item:F1} and \ref{item:F2}:
\begin{enumerate}[label={\rm (\textsc{f}2${}^\flat$)}]
\item\label{item:F2b} $\mu^\star(X) \le 1$ for every $X \subseteq \HHb$.
\end{enumerate}
Note that if $\mu^\ast$ is an upper quasi-density, then $\mu^\ast(X) = \mu^\ast(X \cup X) \le 2\mu^\ast(X)$ by \ref{item:F3}, which together with \ref{item:F2b} implies that the image of $\mu^\ast$ is contained in the interval $[0,1]$.

Of course, every upper density is an upper quasi-density, and it can be proved that non-monotone upper quasi-densities do actually exist, see \cite[Theorem 1]{LT}.
In particular, the upper asymptotic (or natural), upper Banach (or uniform), upper logarithmic, upper Buck, upper P\'olya, and upper analytic densities, together with the upper $\alpha$-densities ($\alpha \ge -1$), are upper densities in the sense of the above definitions, see \cite[\S{} 3 and Examples 4-6 and 8]{LT}.

All these densities, along with many others, have been the subject of a great deal of research, and some of them have played a prominent role in the development of (probabilistic and additive) number theory and some areas of analysis and ergodic theory. Roughly speaking, one reason for this is that densities provide an effective alternative to measures in the study of the interrelation between the ``structure'' of a set of integers and some kind of information about its ``largeness'', a general principle that is brightly illustrated by the Erd\H{o}s-Tur\'an conjecture \cite[\S{} 35.4]{Soif} that any set $X$ of positive integers such that $\sum_{x \in X} \frac{1}{x} = \infty$ contains arbitrarily long arithmetic progressions; see also \cite{LT} and references therein for other pointers.

Given an upper [quasi-]density $\mu^\ast$ on $\HHb$, we refer to $\mu_\ast: \PPc(\HHb) \to \RRb: X \mapsto 1 - \mu^\ast(X^c)$ as the \emph{lower \textup{[}quasi-\textup{]}density} associated to $\mu^\star$. It has been shown in \cite[Theorem 2]{LT} that the image of $\mu^\ast$, and hence also of $\mu_\ast$, is the entire interval $[0,1]$. The ultimate goal of the present paper is to prove that these results can be considerably strengthened.

Before proceeding to details, a remark is in order: While it is arguable that non-monotone quasi-densities are not very interesting from the point of view of applications, it seems meaningful to establish if certain properties of a specific class of objects depend or not on a particular assumption (in the present case of interest, the axiom of monotonicity), as this may provide a better understanding of the objects themselves.
That is basically our motivation for considering upper quasi-densities, instead of restricting our attention to upper densities.

We refer to \cite{LT}, and particularly to \S\S{} 2--3 therein, for notation, terminology and conventions used, but not explained, in this work. Note that measures will always be functions $\Sigma \to \RRb$ for which $\Sigma$ is a sigma-algebra, but unless otherwise specified, they do not need to be nonnegative or countably additive (that is, they may be signed or finitely additive). 

We use the letters $h$, $i$, and $k$, with or without subscripts, for non\-negative integers, the letter $n$ for a positive integer, and the letter $s$ for a positive real. In our notations, $0 \in \NNb$.
\section{Darboux properties}
\label{sec:darboux}
We start this section with a couple more of definitions.
Given a set $S$, we say that a partial function $f: \PPc(S) \pto \RRb$ with domain $\DDc$ has:
\begin{enumerate}[label={\rm (\textsc{d}\arabic{*})}]
\item\label{item:weak_Darboux} the \textit{weak Darboux property} if $\emptyset \in \DDc$ and for every $X \in \DDc$ and $a \in [f(\emptyset),f(X)]$ there is a set $A \in \DDc$ such that $A \subseteq X$ and $f(A) = a$;
\item\label{item:Darboux} the \textit{strong Darboux property} if for all $X, Y \in \DDc$ with $X \subseteq Y$ and every $a \in [f(X),f(Y)]$ there exists a set $A \in \DDc$ such that $X\subseteq A\subseteq Y$ and $f(A) = a$.
\end{enumerate}
Of course, \ref{item:weak_Darboux} is implied by \ref{item:Darboux} provided that $\emptyset \in \DDc$, and the converse is true, e.g., for finitely additive measures. Notice also that, since $f$ does not need to be monotone in condition \ref{item:weak_Darboux}, it may well happen that $f(X) < f(\emptyset)$ for some $X \in \DDc$, in which case $[f(\emptyset), f(X)]$ is empty and there is nothing to prove; analogous considerations apply to the strong Darboux property.

Some authors, either in measure theory, see, e.g., \cite[\S{} V.46.I, Corollary 3${}^\prime$]{Kura} and \cite[\S{} I.2.9, Definition 4]{Dinc}, or in connection to the study of densities in number theory, see, for instance, \cite[\S{} 2]{PaSa}, \cite[p. 217]{MMT}, and \cite{GMT}, refer to \ref{item:weak_Darboux} as simply the Darboux
property (notice that \cite{PaSa} points to \cite{Dinc}, \cite{MMT} points to \cite{Kura}, and \cite{GMT} points to \cite{MMT} as a source for the terminology). But that does not sound very fit to us, as \ref{item:Darboux} is arguably closer than \ref{item:weak_Darboux} to the spirit of the intermediate value property of real-valued functions of a real variable, so we prefer sticking to our own definitions.

Other terms of common usage to allude to condition \ref{item:weak_Darboux}
are ``full-valued'', see, e.g., \cite[p. 174]{M53}, 
and ``strongly non-atomic'', see, e.g., \cite[Definition 5.1.5]{RaRa}.

We note that if $f$ has the weak Darboux property, then $f(X) \le f(\emptyset)$ for every finite $X \subseteq S$: If $f(\emptyset) < f(X)$ for some $X \subseteq S$, then the interval $[f(\emptyset), f(X)]$ has positive width, so $X$ should have infinitely many subsets for $f$ to have the weak Darboux property, which, however, is not the case when $X$ is finite. In addition, we have the following elementary result:
\begin{lemma}
\label{lm:Darboux_&_duality}
Let $\mu^\ast$ be a function $\PPc(\HHb) \to \RRb$ and $\mu_\ast$ its conjugate $\PPc(\HHb) \to \RRb: X\mapsto 1-\mu^\ast(X^c)$. Then $\mu^\ast$ has the strong \textup{(}respectively, weak\textup{)} Darboux property if and only if $\mu_\ast$ does.
\end{lemma}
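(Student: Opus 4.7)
The plan rests on a single duality principle: the complementation map $\sigma: \PPc(\HHb) \to \PPc(\HHb)$, $X \mapsto X^c$, is an inclusion-reversing involution that intertwines $\mu^\ast$ with $\mu_\ast$ via the affine relation $\mu_\ast = 1 - \mu^\ast \circ \sigma$. A useful preliminary remark is that the conjugation $\mu^\ast \leftrightarrow \mu_\ast$ is itself involutive, since $(\mu_\ast)_\ast(X) = 1 - \mu_\ast(X^c) = 1 - (1 - \mu^\ast(X)) = \mu^\ast(X)$; so for each of the two equivalences, it suffices to check a single implication and invoke this symmetry for the converse.

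For the strong Darboux direction, assume $\mu^\ast$ has the property and fix $X \subseteq Y$ in $\PPc(\HHb)$ together with some $a \in [\mu_\ast(X), \mu_\ast(Y)]$. First I would translate the data by $\sigma$: the pair becomes $Y^c \subseteq X^c$ (inclusion reversed), and the defining identity for $\mu_\ast$ recasts the condition on $a$ as
\[
\mu^\ast(Y^c) \;=\; 1 - \mu_\ast(Y) \;\le\; 1 - a \;\le\; 1 - \mu_\ast(X) \;=\; \mu^\ast(X^c).
\]
Next I would invoke the strong Darboux property of $\mu^\ast$ on the pair $Y^c \subseteq X^c$ with target value $1 - a$ to obtain some $B$ with $Y^c \subseteq B \subseteq X^c$ and $\mu^\ast(B) = 1 - a$. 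Setting $A := B^c$ undoes the translation: $X \subseteq A \subseteq Y$ by another application of order reversal, and $\mu_\ast(A) = 1 - \mu^\ast(B) = a$, which is exactly what was required.

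For the weak Darboux direction, the proof is structurally identical, specialized to the case where the ``inner'' set of the pair is $\emptyset$: given $Y$ and $a \in [\mu_\ast(\emptyset), \mu_\ast(Y)]$, complementation produces the pair $Y^c \subseteq \HHb$ with target $1 - a \in [\mu^\ast(Y^c), \mu^\ast(\HHb)]$, and an application of the relevant property of $\mu^\ast$ followed by complementing the resulting set yields $A \subseteq Y$ with $\mu_\ast(A) = a$.

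There is no substantive obstacle in either case; the entire proof is a bookkeeping exercise that amounts to verifying that $\sigma$ reverses set inclusions and simultaneously flips the endpoints of the relevant intervals through the identity $\mu_\ast + \mu^\ast \circ \sigma = 1$. The one point deserving care is to keep the direction of the inclusions straight after passing to complements, since an off-by-one slip here would produce a set outside the required order interval.
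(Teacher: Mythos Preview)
For the strong Darboux property your argument is correct and matches the paper's step for step: pass to complements, apply the hypothesis to the reversed pair $Y^c \subseteq X^c$ with target $1-a$, and complement back.

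The weak Darboux direction, however, does not go through as written. You pass to the pair $Y^c \subseteq \HHb$ with target $1-a \in [\mu^\ast(Y^c), \mu^\ast(\HHb)]$ and then invoke ``the relevant property of $\mu^\ast$''. But the weak Darboux property of $\mu^\ast$, applied to a set $Z$, only produces a \emph{subset} $B \subseteq Z$ hitting a prescribed value in $[\mu^\ast(\emptyset),\mu^\ast(Z)]$; it gives no control from below. What you actually need after complementing is a set $B$ with $Y^c \subseteq B$ and $\mu^\ast(B) = 1-a$, and weak Darboux alone does not yield that. In fact the weak half of the equivalence is false for arbitrary functions: define $\mu^\ast(X) := \dd^\ast(X)$ if $1 \in X$ and $\mu^\ast(X) := 0$ otherwise, where $\dd^\ast$ is the upper asymptotic density on $\NNb$. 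This $\mu^\ast$ has the weak Darboux property (for $b>0$ and $X\ni 1$, adjoin $1$ to any subset of $X$ of asymptotic density $b$), yet $\mu_\ast(\{1\}) = 1 - \mu^\ast(\NNb \setminus \{1\}) = 1$ while the only subsets of $\{1\}$ have $\mu_\ast$-values $0$ and $1$, so $\mu_\ast$ fails weak Darboux. The paper's own proof glides past the same point (it asserts the existence of $A$ with $Y^c\subseteq A$ after setting $X:=\emptyset$, which weak Darboux does not supply), so the defect lies in the weak half of the lemma itself rather than in your rendering of the argument.
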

\begin{proof}
Since $\mu^\ast$ is the conjugate of $\mu_\ast$, it is enough to assume, as we do, that $\mu^\ast$ has the strong (respectively, weak) Darboux property and to prove that the same is true for $\mu_\ast$.

For this, fix $X, Y \subseteq \HHb$ with $X \subseteq Y$ and $a \in [\mu_\ast(X), \mu_\ast(Y)]$, and set $X := \emptyset$ if $\mu^\ast$ has just the weak Darboux property. Then $Y^c \subseteq X^c$, and it follows from the hypotheses, and from the fact that $\mu_\ast(S) + \mu^\ast(S^c) = 1$ for every $S \subseteq \HHb$, that there exists $A \subseteq \HHb$ such that $Y^c \subseteq A \subseteq X^c$ and $1 - \mu_\ast(A^c) = \mu^\ast(A) = 1-a$.
Hence $\mu_\ast(A^c) = a$, which ultimately yields that $\mu_\ast$ has the strong (respectively, weak) Darboux property, when considering that $X \subseteq A^c \subseteq Y$.
\end{proof}
With this in mind, here comes the main contribution of the present paper, which is reminiscent of results of D. Maharam \cite[Theorem 2]{M76}, on finitely additive nonnegative measures\footnote{: In Maharam's own words, the result applies to ``arbitrary (finite, finitely additive) measures on an arbitrary field of subsets of an arbitrary set'', see \cite[p. 49]{M76}. This may sound confusing, as the proof of the theorem uses that the domain of a measure is closed, in particular, under countable unions, and on the other hand, the term ``field of sets'' is now commonly meant for a pair $(S,\mathscr{F})$, where $S$ is a set and $\mathscr{F}$ a subfamily of $\PPc(S)$ that contains $S$ and is closed under finite unions, finite intersections, and absolute complements. The fact is just that Maharam uses the term to refer to what we now call a sigma-algebra, as can be argued by previous work of her \cite{M42}.}, and G. Sikorski
\cite[Problem 12, p. 225]{Sik}, on non-atomic, countably additive, nonnegative measures\footnote{: The latter result is frequently attributed to W. Sierpi\'nski, and in so doing a reference to \cite{Sier} is often provided. But it is unclear why this should be correct, as Sierpi\'nski's paper deals with the convexity of the range of certain \textit{finitely} additive measures $\mathcal P(S) \to \RRb$ for which $S$ is a bounded subset of $\RRb^n$. The (alleged) mistake may have originated from an incorrect interpretation of a footnote on the very same page of Sikorski's book \cite{Sik}, where the reader is addressed to \cite{Sier} and \cite{Fich}, though most likely for the sake of comparison.}.
%
\begin{theorem}
\label{th:upperdarboux}
Every upper quasi-density has the strong Darboux property.
\end{theorem}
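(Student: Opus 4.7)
The plan is to dispose of the endpoint cases trivially and then construct $A$ explicitly, using the arithmetic-progression structure encoded in axiom \ref{item:F4b} as the principal source of density control. If $a = \mu^\ast(X)$ set $A := X$, and if $a = \mu^\ast(Y)$ set $A := Y$; this leaves the interior range $\mu^\ast(X) < a < \mu^\ast(Y)$. In this range I would first observe that $\mu^\ast$ vanishes on every finite set: applying \ref{item:F4b} to the singleton $\{0\} \subseteq \HHb$ (or $\{1\}$ when $\HHb = \NNb^+$) gives $\mu^\ast(\{0\}) = \mu^\ast(\{0\})/k$ for every $k \in \NNb^+$, forcing $\mu^\ast(\{0\}) = 0$; shift-invariance, also contained in \ref{item:F4b}, extends this to every singleton, and then \ref{item:F3} to every finite set. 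Consequently $Y \setminus X$ must be infinite, for otherwise subadditivity would force $\mu^\ast(Y) \le \mu^\ast(X) + \mu^\ast(Y \setminus X) = \mu^\ast(X)$, contrary to the assumption $\mu^\ast(X) < \mu^\ast(Y)$.

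The core step would build $A$ by an iterative enlargement $X = A_0 \subseteq A_1 \subseteq A_2 \subseteq \cdots \subseteq Y$, where at stage $n$ one adjoins to $A_{n-1}$ a subset of $(Y \setminus X) \setminus A_{n-1}$ of the form $(k_n \cdot \HHb + h_n) \cap ((Y \setminus X) \setminus A_{n-1})$. The axiom \ref{item:F4b} pins $\mu^\ast(k_n \cdot \HHb + h_n) = 1/k_n$ exactly, and this together with \ref{item:F1} and \ref{item:F3} pins the density of every finite disjoint union of such progressions. Selecting the parameters $k_n, h_n$ via a base- or Engel-type expansion of the target deviation $a - \mu^\ast(X)$ should allow one to arrange $\mu^\ast(A_n) \to a$ while keeping each $A_n$ inside $Y$.

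The main obstacle is verifying $\mu^\ast(A) = a$ exactly for the limit $A = \bigcup_n A_n$, because \ref{item:F3} is only finitely subadditive and, without monotonicity, even the elementary estimate $\mu^\ast(A_n) \le \mu^\ast(A)$ is not automatic. A naive limit argument is insufficient: for the upper asymptotic density, for example, $\mu^\ast(\bigcup_n A_n)$ need not equal $\lim_n \mu^\ast(A_n)$. I therefore expect the remedy to be a symmetric construction performed on the complement inside $Y^c$: an upper-bound estimate for $\mu^\ast(A^c)$, translated via the identity $\mu_\ast(A) + \mu^\ast(A^c) = 1$ and Lemma \ref{lm:Darboux_&_duality}, would supply a matching lower bound for $\mu^\ast(A)$, and if the two estimates are orchestrated to collapse onto the common value $a$, the desired equality $\mu^\ast(A) = a$ follows. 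Arranging this simultaneous control on $A$ and on $A^c$, while keeping $X \subseteq A \subseteq Y$ throughout, is where I expect the real technical work to lie.
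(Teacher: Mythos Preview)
Your outline correctly isolates the hard step---passing from $\mu^\ast(A_n) \to a$ to $\mu^\ast\!\left(\bigcup_n A_n\right) = a$ with only finite subadditivity and no monotonicity---but the remedy you propose does not close the gap. The duality identity $\mu_\ast(A) + \mu^\ast(A^c) = 1$ gives a lower bound on $\mu^\ast(A)$ only if you can bound $\mu^\ast(A^c)$ from above; yet $A^c = Y^c \cup (Y \setminus A)$, and $\mu^\ast(Y^c) = 1 - \mu_\ast(Y)$ is not controlled by $\mu^\ast(X)$ or $\mu^\ast(Y)$ at all, so no ``symmetric construction on the complement'' can be made to match the target $1-a$. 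Lemma~\ref{lm:Darboux_&_duality} is a red herring here: it transfers the Darboux property from $\mu^\ast$ to $\mu_\ast$ globally, not between a set and its complement in a single instance. There is also a gap one level earlier: with only subadditivity, adjoining $(k_n \cdot \HHb + h_n) \cap (Y \setminus A_{n-1})$ does not pin $\mu^\ast(A_n)$ to any predictable value, so an Engel-type expansion of $a - \mu^\ast(X)$ does not translate into $\mu^\ast(A_n) \to a$.

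The paper's proof replaces the complement idea by a \emph{two-sided sandwich}: it builds simultaneously an increasing sequence $(A_n)$ and a decreasing sequence $(B_n)$ with $X \subseteq A_n \subseteq B_n \subseteq Y$, $\mu^\ast(A_n) < a \le \mu^\ast(B_n)$, and---this is the crucial point---$B_n \setminus A_n$ contained in a \emph{single} arithmetic progression $k \cdot \HHb + h$ with $k \ge n$. The existence of such $A_{n+1}, B_{n+1}$ given $A_n, B_n$ is the content of Lemma~\ref{lem:increasing}, proved by an extremal choice among residue-class sets $\mathcal H \subseteq \llb 0,k-1\rrb$. Setting $A = \bigcup_n A_n$, both $A \setminus A_n$ and $B_n \setminus A$ lie inside $B_n \setminus A_n$, and Lemma~\ref{lm:relaxed_(F2)_with_upper_arithmetic_bound} (a ``weak monotonicity'' valid whenever the superset is essentially a finite union of arithmetic progressions) bounds each by $1/k \le 1/n$. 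Subadditivity alone then gives $\mu^\ast(B_n) - \tfrac{1}{n} \le \mu^\ast(A) \le \mu^\ast(A_n) + \tfrac{1}{n}$, squeezing $\mu^\ast(A)$ to $a$ without ever touching $A^c$.
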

%
The theorem, which is proved in \S{} \ref{sec:proofdarboux}, is a major generalization of \cite[Theorem 2]{LT}, and it leads, together with Lemma \ref{lm:Darboux_&_duality}, to the following results (we omit further details):
\begin{corollary}
\label{cor:lowerdarboux}
Every lower quasi-density has the strong Darboux property.
\end{corollary}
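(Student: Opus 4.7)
The plan is a one-line derivation from the already-established machinery: the corollary is immediate from Theorem \ref{th:upperdarboux} combined with Lemma \ref{lm:Darboux_&_duality}, as signaled by the authors' parenthetical ``we omit further details''.

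In detail, let $\mu_\ast$ be an arbitrary lower quasi-density on $\HHb$. By the definition recalled in the Introduction, a lower quasi-density is nothing other than the conjugate $X \mapsto 1 - \mu^\ast(X^c)$ of some upper quasi-density $\mu^\ast$; in particular, $\mu_\ast$ and $\mu^\ast$ are conjugate in the sense of Lemma \ref{lm:Darboux_&_duality}. Theorem \ref{th:upperdarboux} applies to $\mu^\ast$ and asserts that it has the strong Darboux property. Lemma \ref{lm:Darboux_&_duality}, applied to this pair, then transports the strong Darboux property from $\mu^\ast$ to $\mu_\ast$, which is the conclusion sought.

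The main obstacle is essentially nil: the corollary's substance lives entirely in Theorem \ref{th:upperdarboux}, whose proof the authors defer to \S \ref{sec:proofdarboux} and which must contend with the absence of monotonicity in the axioms of an upper quasi-density. The passage from $\mu^\ast$ to $\mu_\ast$ is a routine duality already packaged in Lemma \ref{lm:Darboux_&_duality}; no further work is needed beyond invoking the theorem and the lemma in sequence.
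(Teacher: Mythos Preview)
Your proposal is correct and matches the paper's own argument exactly: the corollary is obtained by combining Theorem~\ref{th:upperdarboux} with Lemma~\ref{lm:Darboux_&_duality}, precisely as the authors indicate before stating it and as their ``we omit further details'' confirms.
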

In particular, the next corollary is now obvious,
but we record it here for future reference:
\begin{corollary}
\label{cor:weak_Darboux}
Upper and lower quasi-densities have the weak Darboux property.
\end{corollary}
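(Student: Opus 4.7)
The plan is to produce the required $A$ with $X \subseteq A \subseteq Y$ and $\mu^\ast(A) = a$ by an explicit construction. Set $b := a - \mu^\ast(X)$ and $c := \mu^\ast(Y) - a$; the boundary cases $b = 0$ and $c = 0$ are trivial ($A = X$ or $A = Y$), so I assume $b, c > 0$. Any candidate $A$ is of the form $A = X \cup B$ with $B := A \setminus X \subseteq Z := Y \setminus X$, and subadditivity \ref{item:F3} yields the two basic estimates
\begin{equation*}
\mu^\ast(X \cup B) \le \mu^\ast(X) + \mu^\ast(B), \qquad \mu^\ast(X \cup B) \ge \mu^\ast(Y) - \mu^\ast(Z \setminus B),
\end{equation*}
the second one coming from subadditivity applied to $Y = (X \cup B) \cup (Z \setminus B)$. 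A first attempt is to enforce $\mu^\ast(B) \le b$ and $\mu^\ast(Z \setminus B) \le c$ separately, which would collapse both inequalities into $\mu^\ast(X \cup B) = a$. Combining these with $\mu^\ast(Z) \le \mu^\ast(B) + \mu^\ast(Z \setminus B)$ would force $\mu^\ast(Z) \le b + c = \mu^\ast(Y) - \mu^\ast(X)$, whereas only the reverse inequality $\mu^\ast(Z) \ge b + c$ is guaranteed by subadditivity on $Y = X \cup Z$.

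The key structural tool I would exploit is that \ref{item:F1} and \ref{item:F4b} render subadditivity tight on residue-class decompositions: for every integer $k \ge 1$ the partition $\HHb = \bigsqcup_{i=0}^{k-1}(k\HHb + i)$ satisfies $1 = \mu^\ast(\HHb) \le \sum_{i=0}^{k-1} \mu^\ast(k\HHb + i) = 1$, so all the subadditive inequalities in sight become equalities and hence $\mu^\ast(\bigcup_{i \in I}(k\HHb + i)) = |I|/k$ for every $I \subseteq \{0,\dots,k-1\}$. Using an increasing sequence of moduli $k_n$ with $1/k_n \to 0$, I would construct $B$ as a nested union $B = \bigcup_n B_n$, each $B_n$ being a union of traces $Z \cap (k_n \HHb + i)$ chosen to pull $\mu^\ast(X \cup B_n)$ toward $a$ from below while simultaneously keeping $\mu^\ast(Y \setminus (X \cup B_n))$ close to $c$ from above. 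The residue-class backbone provides the additive granularity needed to make such fine adjustments at each stage.

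\textbf{Main obstacle.} The heart of the difficulty is to accommodate the possibly strict subadditivity $\mu^\ast(X) + \mu^\ast(Z) > \mu^\ast(Y)$, which can indeed occur because \ref{item:F2} is not assumed (and in fact does not pin down $\mu^\ast$ on intermediate unions even when it is). Since the two natural upper bounds on $\mu^\ast(B)$ and $\mu^\ast(Z \setminus B)$ cannot then both be tight, the construction must exploit strict subadditivity \emph{also} on the pair $(X, B)$ — that is, $\mu^\ast(X \cup B) < \mu^\ast(X) + \mu^\ast(B)$ — to absorb the excess $\mu^\ast(Z) - (b+c)$ into the overlap density between $X$ and $B$. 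Showing that the iterative choice of residue-class slices of $Z$ yields a limit set $B$ for which $\mu^\ast(X \cup B)$ lands exactly on $a$, in spite of this uncontrolled overlap, is the delicate combinatorial step that I expect to be the main technical challenge, and the place where the interplay of \ref{item:F1}, \ref{item:F3}, and \ref{item:F4b} is decisive.
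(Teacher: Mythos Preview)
Your proposal misidentifies the target. The corollary is recorded in the paper as an immediate consequence of Theorem~\ref{th:upperdarboux} and Corollary~\ref{cor:lowerdarboux}: once the strong Darboux property is established for upper and lower quasi-densities, the weak Darboux property follows by specializing to $X=\emptyset$ (the domain here is all of $\PPc(\HHb)$, which certainly contains $\emptyset$). There is nothing further to do.

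What you are actually sketching is a direct attack on the \emph{strong} Darboux property, i.e.\ on Theorem~\ref{th:upperdarboux} itself. Your general strategy---approximate $A$ by residue-class slices of $Z=Y\setminus X$, exploiting the exact additivity of $\mu^\ast$ on partitions into residue classes forced by \ref{item:F1} and \ref{item:F4b}---is the right intuition and is close in spirit to the paper's proof of Theorem~\ref{th:upperdarboux}. But your write-up stops precisely at the hard step: you yourself flag as the ``Main obstacle'' the problem of showing that the limiting set lands exactly on $a$, and you do not resolve it. The paper does not try to control $\mu^\ast(B)$ and $\mu^\ast(Z\setminus B)$ separately (which, as you correctly note, can fail when subadditivity on $Y=X\cup Z$ is strict). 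Instead it builds \emph{two} nested sequences $A_n\subseteq B_n$ with $\mu^\ast(A_n)<a\le\mu^\ast(B_n)$ and $B_n\setminus A_n$ trapped inside a single class $k\cdot\HHb+h$ with $k\ge n$; subadditivity then gives $|\mu^\ast(A)-a|\le 1/n$ for $A=\bigcup_n A_n$. The combinatorial engine is Lemma~\ref{lem:increasing}, which at each stage produces a set $\HHc_0$ of residues and one extra residue $h_0$ so that adjoining the $\HHc_0$-slice of $B_v$ to $A_v$ stays below $a$ while also adjoining the $h_0$-slice pushes the value to at least $a$. Your proposal gestures toward this mechanism but does not supply it.
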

Special instances of Corollary \ref{cor:weak_Darboux} have already appeared in the literature, the proofs of these former results being based on ad hoc arguments tailored to the particular densities under consideration (contrary to the proof of Theorem \ref{th:upperdarboux}). To be more precise, it is known from work of G.~Grekos, see \cite{GrekThes, Grek78}, that the upper $\alpha$-densities (on $\NNb^+$) have the weak Darboux property for every real $\alpha \ge -1$, and this has been later extended (and in a stronger form) to certain weighted densities (on $\NNb^+$), see \cite[Proposition 1]{GMT}.
%
%
\section{Proof of Theorem \ref{th:upperdarboux}}
\label{sec:proofdarboux}
We will need the following results from \cite[\S{} 6]{LT}, which yield that upper quasi-densities, though not necessarily monotone, satisfy a kind of ``weak monotonicity'' (we include some of the proofs for completeness).

For ease of notation, we denote by $\VVc_{k,\HHc}$, for all $k \in \NNb^+$ and $\HHc \subseteq \NNb$, the set $\bigcup_{h\fixed[0.1]{\text{ }}\in\fixed[0.1]{\text{ }}\HHc} (k \cdot \HHb + h)$, and write $\VVc_k$ in place of $\VVc_{k,\HHc}$ when $\HHc = \llb 0, k-1 \rrb$.
\begin{lemma}
\label{lem:mu_of_finite_sets}
If $\mu^\ast$ is an upper quasi-density, then $\mu^\ast(X) = 0$ for every finite $X \subseteq \HHb$.
\end{lemma}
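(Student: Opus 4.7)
The plan is to reduce the claim to showing that $\mu^\ast$ vanishes on singletons. Once $\mu^\ast(\{x\}) = 0$ is established for every $x \in \HHb$, any finite set $X \subseteq \HHb$ will satisfy $0 \le \mu^\ast(X) \le \sum_{x \in X} \mu^\ast(\{x\}) = 0$ by iterating the subadditivity axiom \ref{item:F3}, where the nonnegativity comes from the observation (recorded just after \ref{item:F2b}) that the image of an upper quasi-density is contained in $[0,1]$.

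To handle singletons, I would first use the shift part of \ref{item:F4b} (its $k = 1$ instance) to show that $\mu^\ast(\{z\})$ does not depend on $z \in \HHb$. Indeed, for any $y, z \in \HHb$ with $y \ne z$, one of them exceeds the other, say $z = y + h$ with $h \in \NNb^+$; then \ref{item:F4b} with $k = 1$ yields $\mu^\ast(\{z\}) = \mu^\ast(1 \cdot \{y\} + h) = \mu^\ast(\{y\})$. So let $c := \mu^\ast(\{y\})$ denote this common value.

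Next, I would exploit the dilation part of \ref{item:F4b}: for any $k \ge 2$ and any $h \in \NNb^+$, the set $\{ky + h\} = k \cdot \{y\} + h$ is itself a singleton of $\HHb$, so on the one hand \ref{item:F4b} yields $\mu^\ast(\{ky+h\}) = \frac{1}{k}\mu^\ast(\{y\}) = c/k$, while on the other hand the previous step gives that the same value equals $c$. Hence $c = c/k$ for every $k \ge 2$, which forces $c = 0$ and completes the argument.

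There is no real obstacle beyond recognizing the interplay between the shift invariance and the $1/k$ scaling built into \ref{item:F4b}; the only minor bookkeeping is the case $\HHb = \NNb$ with $y = 0$, but this is absorbed into the translation step, since any nonzero element of $\NNb$ is of the form $0 + h$ for some $h \in \NNb^+$.
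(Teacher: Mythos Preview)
Your argument is correct and follows essentially the same route as the paper's: reduce to singletons via nonnegativity and iterated subadditivity, then combine translation invariance with the $1/k$ scaling in \ref{item:F4b} to force the common singleton value to vanish. The one point to tighten is the case $X=\emptyset$: iterating \ref{item:F3} over the elements of $X$ yields $\mu^\ast(X)\le 0$ only when $X$ is nonempty, and without monotonicity you cannot bound $\mu^\ast(\emptyset)$ by $\mu^\ast(\{x\})=0$; the paper disposes of this (and of $\{0\}$ when $0\in\HHb$) separately by observing that $k\cdot\emptyset=\emptyset$, so \ref{item:F4} gives $\mu^\ast(\emptyset)=\tfrac{1}{2}\mu^\ast(\emptyset)$.
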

\begin{proof}
If $X = \emptyset$, or $0 \in \HHb$ and $X = \{0\}$, then $k \cdot X = X$ for all $k \ge 1$, and we have from \ref{item:F4} that $2\mu^\ast(X) = \mu^\ast(X)$, i.e. $\mu^\ast(X) = 0$. On the other hand, we get by \ref{item:F4b} that, for every $k \ge 1$,
$$
\mu^\ast(\{1\}) = \mu^\ast(\{1\} + k-1) = \mu^\ast(\{k\}) = \mu^\ast(k \cdot \{1\}) = \frac{1}{k} \mu^\ast(\{1\}),
$$
which is possible only if $\mu^\ast(\{1\}) = 0$. Consequently, $\mu^\ast(\{k\}) = 0$ for all $k \ge 0$, and in addition, if $\HHb = \ZZb$ and $k \le 0$, then $\mu^\ast(\{k\}) = \mu^\ast(\{k\} + (-k)) = \mu^\ast(\{0\}) = 0$.

Since upper quasi-densities are non-negative functions (as was noted in the introduction), it follows that if $X \subseteq \HHb$ is finite then $
0 \le \mu^\ast(X) \le  \sum_{x \fixed[0.2]{\text{ }}\in X} \mu^\ast(\{x\}) = 0$, hence $\mu^\ast(X) = 0$.
\end{proof}
\begin{lemma}
\label{lem:unions_of_translates}
Let $\mu^\ast$ be an upper quasi-density, and for a fixed $k \in \NNb^+$ let $h_1, \ldots, h_n \in \NNb$ be such that $h_i \not\equiv h_j \bmod k$ for $1 \le i < j \le n$. Then, $\mu^\ast(\mathcal{V}_{k,\mathcal H} \cup \mathcal{V}) = \frac{n}{k}$ and $\mu^\ast(\mathcal{V}_{k,\mathcal H} \setminus \mathcal{V}) \ge \frac{n}{k}$ for every finite $\mathcal{V} \subseteq \HHb$, where $\mathcal H := \{h_1, \ldots, h_n\}$.
\end{lemma}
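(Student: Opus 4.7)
The plan is to prove the equality $\mu^\ast(\mathcal{V}_{k, \mathcal{H}} \cup \mathcal{V}) = n/k$ by matching upper and lower bounds, and then derive $\mu^\ast(\mathcal{V}_{k, \mathcal{H}} \setminus \mathcal{V}) \ge n/k$ as a short corollary. The upper bound is immediate: iterated subadditivity \ref{item:F3} combined with \ref{item:F4b} and \ref{item:F1} gives $\mu^\ast(k \cdot \HHb + h) = 1/k$ for every $h \ge 0$, and together with Lemma \ref{lem:mu_of_finite_sets} (which forces $\mu^\ast(\mathcal{V}) = 0$) this yields
$$
\mu^\ast(\mathcal{V}_{k, \mathcal{H}} \cup \mathcal{V}) \le \mu^\ast(\mathcal{V}) + \sum_{i=1}^{n} \mu^\ast(k \cdot \HHb + h_i) = \frac{n}{k}.
$$

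The lower bound is the main step. I would introduce the complementary set of residues $\mathcal{H}' := \llb 0, k-1 \rrb \setminus \{h_i \bmod k : 1 \le i \le n\}$, of size $k - n$, and observe that $\mathcal{V}_{k, \mathcal{H}} \cup \mathcal{V}_{k, \mathcal{H}'}$ covers $\HHb$ up to a finite exceptional set $F$ (empty when $\HHb = \ZZb$, finite in general because progressions of the form $k \cdot \NNb + h$ may miss the few residue-class representatives below $h$). Writing $\HHb = (\mathcal{V}_{k, \mathcal{H}} \cup \mathcal{V}) \cup \mathcal{V}_{k, \mathcal{H}'} \cup F$ and applying iterated subadditivity together with \ref{item:F1}, Lemma \ref{lem:mu_of_finite_sets}, and the upper-bound argument just given applied to $\mathcal{V}_{k, \mathcal{H}'}$, I obtain
$$
1 \le \mu^\ast(\mathcal{V}_{k, \mathcal{H}} \cup \mathcal{V}) + \frac{k-n}{k},
$$
which rearranges to $\mu^\ast(\mathcal{V}_{k, \mathcal{H}} \cup \mathcal{V}) \ge n/k$.

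For the second assertion, decomposing $\mathcal{V}_{k, \mathcal{H}} = (\mathcal{V}_{k, \mathcal{H}} \setminus \mathcal{V}) \cup (\mathcal{V}_{k, \mathcal{H}} \cap \mathcal{V})$, using subadditivity together with Lemma \ref{lem:mu_of_finite_sets} on the finite intersection, and specializing the equality already proved to $\mathcal{V} = \emptyset$ delivers $n/k = \mu^\ast(\mathcal{V}_{k, \mathcal{H}}) \le \mu^\ast(\mathcal{V}_{k, \mathcal{H}} \setminus \mathcal{V})$. The principal obstacle throughout is the lack of monotonicity for quasi-densities: finite correction sets such as $F$ or $\mathcal{V} \cap \mathcal{V}_{k, \mathcal{H}}$ cannot be discarded by monotonicity but must be absorbed through subadditivity and Lemma \ref{lem:mu_of_finite_sets}, which is precisely why the indirect route through $\mathcal{H}'$ is needed: it balances the two upper estimates against \ref{item:F1} in an ``$n + (k - n) = k$'' accounting that pins down the exact value $n/k$.
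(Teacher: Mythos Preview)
Your argument is correct and follows essentially the same route as the paper: both introduce the complementary residue set $\mathcal{H}' = \mathcal{H}^c$, cover $\HHb$ by $\mathcal{V}_{k,\mathcal H} \cup \mathcal{V}_{k,\mathcal H'}$ up to a finite set, and balance the subadditive estimates against \ref{item:F1} to pin down $n/k$, with Lemma~\ref{lem:mu_of_finite_sets} absorbing all finite corrections. The only difference is cosmetic---the paper runs the upper and lower bounds together in a single squeezed chain $1 \le \cdots \le 1$, whereas you establish them separately---but the content is identical.
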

\begin{proof}
Let $l_i$ be, for each $i=1, \ldots, n$, the remainder of the integer division of $h_i$ by $k$ (in such a way that $0 \le l_i < k$), and set $\mathcal{H}^c := \llb 0, k-1 \rrb \setminus \{l_1, \ldots, l_n\}$.

Clearly, $\HHb = \mathcal{V}_{k,\mathcal H} \cup \mathcal{V}_{k,\mathcal H^c} \cup S$ for some finite $S \subseteq \HHb$. Thus,
we get by \ref{item:F1}, \ref{item:F3}, and \ref{item:F4b} and Lemma \ref{lem:mu_of_finite_sets} that, however we choose a finite $\mathcal{V} \subseteq \HHb$,
\begin{equation*}
\begin{split}
1 = \mu^\ast(\HHb)
    & \le \mu^\ast(\mathcal{V}_{k,\mathcal H} \cup \mathcal{V}) + \mu^\ast(\mathcal{V}_{k,\mathcal H^c} \cup S) \le \mu^\ast(\mathcal{V}_{k,\mathcal H}) + \mu^\ast(\mathcal{V}_{k,\mathcal H^c}) + \mu^\ast(S) + \mu^\ast(\mathcal{V}) \\
    & = \mu^\ast(\mathcal{V}_{k,\mathcal H}) + \mu^\ast(\mathcal{V}_{k,\mathcal H^c}) \le n \mu^\ast(k \cdot \HHb) + (k-n) \mu^\ast(k \cdot \HHb) = k \mu^\ast(k \cdot \HHb) = 1,
\end{split}
\end{equation*}
which is possible only if $\mu^\ast(\mathcal{V}_{k,\mathcal H} \cup \mathcal{V}) = \mu^\ast(\mathcal{V}_{k,\mathcal H}) = n\mu^\ast(k \cdot \HHb) = \frac{n}{k}$.

As for the rest, we now have from \ref{item:F5}, Lemma \ref{lem:mu_of_finite_sets}, and the above that, for every finite $\mathcal V \subseteq \HHb$, $\frac{n}{k} = \mu^\ast(\mathcal{V}_{k,\mathcal H}) \le \mu^\ast(\mathcal{V}_{k,\mathcal H} \setminus \mathcal{V}) + \mu^\ast(\mathcal V) = \mu^\ast(\mathcal{V}_{k,\mathcal H} \setminus \mathcal V)$, which completes the proof.
\end{proof}
\begin{lemma}
\label{lm:relaxed_(F2)_with_upper_arithmetic_bound}
Let $\mu^\ast$ be an upper quasi-density on $\HHb$, and pick $X, Y \subseteq \HHb$ such that $X \subseteq Y$ and $Y$ is a finite union of arithmetic progressions of $\HHb$, or differs from a set of this form by finitely many integers. Then $\mu^\ast(X) \le \mu^\ast(Y)$.
\end{lemma}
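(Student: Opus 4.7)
The plan is to sandwich $\mu^\ast(X)$ and $\mu^\ast(Y)$ against a common rational $n/k$ read off from the periodic structure of $Y$, namely to show $\mu^\ast(X) \le n/k \le \mu^\ast(Y)$, from which the desired inequality is immediate.

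First I would normalize $Y$: by taking $k$ to be the least common multiple of the common differences of the arithmetic progressions making up the periodic part of $Y$, and by absorbing any residual shift overhang into the finite exceptional part, one can write $Y = (\mathcal{V}_{k,\mathcal H} \setminus F) \cup F'$ for some $\mathcal H = \{h_1,\ldots,h_n\} \subseteq \llb 0, k-1 \rrb$ of $n$ distinct residues modulo $k$, and finite sets $F \subseteq \mathcal{V}_{k,\mathcal H}$ and $F' \subseteq \HHb \setminus \mathcal{V}_{k,\mathcal H}$; in particular $F \cap F' = \emptyset$ for free.

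For the lower bound $\mu^\ast(Y) \ge n/k$, I would note that $Y \cup F = \mathcal{V}_{k,\mathcal H} \cup F'$, apply Lemma \ref{lem:unions_of_translates} to get $\mu^\ast(\mathcal{V}_{k,\mathcal H} \cup F') = n/k$, and then combine subadditivity \ref{item:F3} with Lemma \ref{lem:mu_of_finite_sets} to arrive at $n/k = \mu^\ast(Y \cup F) \le \mu^\ast(Y) + \mu^\ast(F) = \mu^\ast(Y)$. For the upper bound $\mu^\ast(X) \le n/k$, my plan is to split $X = X_1 \sqcup X_2$ with $X_1 := X \cap \mathcal{V}_{k,\mathcal H}$ and $X_2 := X \setminus \mathcal{V}_{k,\mathcal H} \subseteq F'$ finite, and then further partition $X_1 = \bigsqcup_{i=1}^n X_{1,i}$ along the residues $h_i$, where $X_{1,i} := X_1 \cap (k \cdot \HHb + h_i)$. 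Each such $X_{1,i}$ has the unique form $k \cdot Y_i + h_i$ for some $Y_i \subseteq \HHb$, and so \ref{item:F4b} (or \ref{item:F4} when $h_i = 0$) together with the universal bound $\mu^\ast(Y_i) \le 1$ coming from \ref{item:F2b} forces $\mu^\ast(X_{1,i}) = \tfrac{1}{k}\mu^\ast(Y_i) \le \tfrac{1}{k}$; subadditivity and Lemma \ref{lem:mu_of_finite_sets} then deliver $\mu^\ast(X) \le \sum_{i=1}^n \mu^\ast(X_{1,i}) + \mu^\ast(X_2) \le n/k$.

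The main conceptual hurdle, to my mind, is recognizing that an upper bound on $\mu^\ast(X)$ cannot be obtained by exploiting the inclusion $X \subseteq Y$ directly (that would require the very monotonicity we do not have), but rather has to come from the \emph{scaling} axiom: every fragment of $X$ lying in a single arithmetic progression of common difference $k$ has $\mu^\ast$-value at most $1/k$ for free, simply because that progression is a scaled-and-shifted copy of $\HHb$. Once this ``weak monotonicity via \ref{item:F4b}'' is identified, what remains is routine bookkeeping of finite symmetric differences, all absorbed by Lemma \ref{lem:mu_of_finite_sets}.
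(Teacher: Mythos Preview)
Your proposal is correct and follows essentially the same route as the paper's proof: both arguments sandwich $\mu^\ast(X)$ and $\mu^\ast(Y)$ around the common value $n/k$ by (i) using Lemma~\ref{lem:unions_of_translates} together with subadditivity and Lemma~\ref{lem:mu_of_finite_sets} to get $\mu^\ast(Y)\ge n/k$, and (ii) partitioning $X$ (up to a finite piece) along the residues $h_i$ and invoking \ref{item:F4b} plus the bound $\mu^\ast\le 1$ to get $\mu^\ast(X)\le n/k$. Your write-up is in fact slightly more explicit about the finite symmetric-difference bookkeeping and the edge case $h_i=0$.
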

\begin{proof}
By hypothesis, there exist $k \in \NNb^+$ and $\HHc \subseteq \llb 0, k-1 \rrb$ such that the symmetric difference of $Y$ and $\mathcal V_{k,\mathcal H}$ is finite.
Using that $X \subseteq Y$, this yields that the relative complement of $\bigcup_{h \fixed[0.2]{\text{ }}\in\fixed[0.1]{\text{ }} \HHc} X_h$ in $X$, where $X_h := X \cap (k \cdot \HHb + h) \subseteq X$ for each $h \in \HHc$, is finite too. Therefore, we obtain that $\mu^\ast(X) \le \sum_{h \fixed[0.2]{\text{ }}\in\fixed[0.1]{\text{ }} \HHc}\mu^\ast(X_h)$ by Lemma \ref{lem:mu_of_finite_sets} and \ref{item:F3}, and $\mu^\ast(Y) \ge \frac{1}{k}|\HHc|$ by Lemma \ref{lem:unions_of_translates}.

On the other hand, we have that, however we choose $h \in \HHc$, there is a set $S_h \subseteq \HHb$ for which $X_h = k \cdot S_h + h$, so we get from the above, \ref{item:F4b} and the fact that $\imag(\mu^\ast) \subseteq [0,1]$ that
$$
\mu^\ast(X) \le \sum_{h \fixed[0.2]{\text{ }}\in\fixed[0.1]{\text{ }} \HHc} \mu^\ast(X_h) = \sum_{h \fixed[0.2]{\text{ }}\in\fixed[0.1]{\text{ }} \HHc} \mu^\ast(k \cdot S_h + h) = \frac{1}{k} \sum_{h \fixed[0.2]{\text{ }}\in\fixed[0.1]{\text{ }} \HHc} \mu^\ast(S_h) \le \mu^\ast(Y),
$$
which completes the proof.
\end{proof}
We use Lemma \ref{lm:relaxed_(F2)_with_upper_arithmetic_bound} in the proof of the next result, which may be of independent interest, insofar as it can be adapted to prove other statements along the lines of Theorem \ref{th:upperdarboux}, but for different classes of ``densities'' than the ones picked up in this work.
\begin{lemma}
\label{lem:increasing}
Let $\mu^\ast$ be an upper quasi-density on $\HHb$, pick $A, B \subseteq \HHb$ with $A \subseteq B$ and $\mu^\ast(A) < \mu^\ast(B)$,
and fix $a, b \in \RRb$ such that $\mu^\ast(A) \le a < b \le \mu^\ast(B)$. Then, for every $k > (b-a)^{-1}$ there exist $\HHc_0 \subseteq \llb 0, k-1 \rrb$ and $h_0 \in \llb 0, k-1 \rrb$ with the property that
$$
a < \mu^\ast(A \cup (B \cap \VVc_{k,\HHc_0})) < b \le \mu^\ast(A \cup (B \cap \VVc_{k,\HHc_0 \fixed[0.2]{\text{ }} \cup \fixed[0.2]{\text{ }} \{h_0\}})).
$$
\end{lemma}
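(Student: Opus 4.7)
The plan is to reduce the problem to a discrete ``intermediate value'' argument, by tracking how the auxiliary quantity
\[
f(\HHc) := \mu^\ast\bigl(A \cup (B \cap \VVc_{k,\HHc})\bigr)
\]
varies as residue classes are adjoined to $\HHc \subseteq \llb 0, k-1 \rrb$ one at a time, and then identifying $\HHc_0$ and $h_0$ at the first moment $f$ meets the threshold $b$.

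First I would pin down the two boundary values of $f$: clearly $f(\emptyset) = \mu^\ast(A) \le a$, and since $A \subseteq B$ while $\HHb \setminus \VVc_{k,\llb 0, k-1 \rrb}$ is finite (empty for $\HHb \in \{\ZZb, \NNb\}$ and equal to $\{1, \ldots, k-1\}$ for $\HHb = \NNb^+$), a direct application of \ref{item:F3} together with Lemma \ref{lem:mu_of_finite_sets} yields $f(\llb 0, k-1 \rrb) \ge \mu^\ast(B) \ge b$. The core ingredient is then the increment bound
\[
f(\HHc \cup \{h\}) \le f(\HHc) + \tfrac{1}{k}, \qquad h \in \llb 0, k-1 \rrb \setminus \HHc,
\]
which I would derive by decomposing $A \cup (B \cap \VVc_{k,\HHc \cup \{h\}})$ as $(A \cup (B \cap \VVc_{k,\HHc})) \cup (B \cap (k \cdot \HHb + h))$, invoking \ref{item:F3}, and finally bounding $\mu^\ast(B \cap (k \cdot \HHb + h)) \le \mu^\ast(k \cdot \HHb + h) = 1/k$ via Lemma \ref{lm:relaxed_(F2)_with_upper_arithmetic_bound} and \ref{item:F4b}.

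I expect this last bound to be the only genuinely delicate point: without monotonicity as an axiom one cannot directly compare $\mu^\ast$ on the nested pair $B \cap (k \cdot \HHb + h) \subseteq k \cdot \HHb + h$, and it is precisely the weak monotonicity supplied by Lemma \ref{lm:relaxed_(F2)_with_upper_arithmetic_bound}—applicable because $k \cdot \HHb + h$ is a single arithmetic progression—that is needed here.

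With these two ingredients, the conclusion follows by enumerating $\llb 0, k-1 \rrb$ in any order and successively adjoining its elements to $\emptyset$, producing a sequence of $f$-values that starts at $\le a$, ends at $\ge b$, and whose consecutive gaps are bounded by $1/k < b - a$ (using $k > (b-a)^{-1}$). Taking $\HHc_0$ to consist of the residue classes enumerated strictly before the first step at which $f$ crosses into $[b, \infty)$ and letting $h_0$ be the class added at that step, minimality yields $f(\HHc_0) < b \le f(\HHc_0 \cup \{h_0\})$, while the increment bound combined with $1/k < b - a$ gives $f(\HHc_0) \ge f(\HHc_0 \cup \{h_0\}) - 1/k \ge b - 1/k > a$, as required.
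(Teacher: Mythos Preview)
Your argument is correct and rests on exactly the same analytical core as the paper's: the boundary values $f(\emptyset)=\mu^\ast(A)\le a$ and $f(\llb 0,k-1\rrb)\ge\mu^\ast(B)\ge b$, together with the increment bound $f(\HHc\cup\{h\})\le f(\HHc)+\tfrac{1}{k}$ obtained from \ref{item:F3} and the weak monotonicity of Lemma~\ref{lm:relaxed_(F2)_with_upper_arithmetic_bound} applied to the single progression $k\cdot\HHb+h$.

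The only genuine difference is in how the pair $(\HHc_0,h_0)$ is located. You fix an enumeration of $\llb 0,k-1\rrb$, walk along it, and take the first step at which $f$ reaches $b$; the increment bound then forces $f$ at the previous step to lie in $(a,b)$. The paper instead runs three extremal-cardinality arguments: it first picks a \emph{minimal} $\HHc_a$ with $f(\HHc_a)>a$ and argues by contradiction that $f(\HHc_a)<b$; then a \emph{maximal} $\HHc_0\supseteq\HHc_a$ still satisfying $a<f(\HHc_0)<b$; then a \emph{minimal} $\HHc_b\supseteq\HHc_0$ with $f(\HHc_b)\ge b$, and finally shows $|\HHc_b|=|\HHc_0|+1$. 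Your route is cleaner and shorter; the paper's is enumeration-free but gains nothing beyond that, since any enumeration works for your argument and the conclusion is identical.
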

\begin{proof}
Fix an integer $k > (b-a)^{-1}$ and let $\LLc_k$ be the set of all subsets $\HHc$ of $\llb 0, k-1 \rrb$ for which $\mu^\ast(A \cup (B \cap \VVc_{k,\HHc})) > a$.
We have that $\HHb = S \cup \VVc_k$ for some finite set $S$; in fact, $S = \llb 1, k-1 \rrb$ if $\HHb = \NNb^+$, and $S = \emptyset$ otherwise. So it follows from \ref{item:F3} and Lemma \ref{lm:relaxed_(F2)_with_upper_arithmetic_bound} that
\begin{equation}
\label{equ:majorization}
\begin{split}
a < \mu^\ast(B)
    & = \mu^\ast(A \cup B) = \mu^\ast(A \cup (B \cap (S \cup \VVc_k))) = \mu^\ast(A \cup (B \cap \VVc_k) \cup (B \cap S)) \\
    & \le \mu^\ast(A \cup (B \cap \VVc_k)) + \mu^\ast(B \cap S) = \mu^\ast(A \cup (B \cap \VVc_k)).
\end{split}
\end{equation}
This implies that $\llb 0, k-1 \rrb \in \LLc_k$. Let $\HHc_a$ be a set of minimal cardinality in $\LLc_k$ (which exists since $\LLc_k$ is finite and nonempty), and observe that $\HHc_a$ is nonempty, otherwise we would have that $\VVc_{k,\HHc_a} = \emptyset$, and hence $a < \mu^\ast(A \cup (B \cap \VVc_{k,\HHc_a})) = \mu^\ast(A) \le a$, which is absurd.

Now, suppose for the sake of a contradiction that $\mu^\ast(A \cup (B \cap \VVc_{k,\HHc_a})) \ge b$, and using that $\HHc_a \ne \emptyset$, fix $h \in \HHc_a$. Since $|\HHc_a \setminus \{h\}| < |\HHc_a|$ and $\HHc_a$ is, by construction, an element of minimal cardinality in $\LLc_k$, we must have that $\mu^\ast(A \cup (B \cap \VVc_{k,\HHc_a \setminus \{h\}})) \le a$.
But it holds that
\begin{equation*}
\begin{split}
A \cup (B \cap \VVc_{k,\HHc_a})
    & = A \cup (B \cap \VVc_{k, \HHc_a \setminus \{h\}}) \cup (B \cap (k \cdot \HHb + h)),
\end{split}
\end{equation*}
and so we get from \ref{item:F3}, \ref{item:F4b}, and Lemma \ref{lm:relaxed_(F2)_with_upper_arithmetic_bound} that
\begin{equation*}
\begin{split}
b \le \mu^\ast(A \cup (B \cap \VVc_{k,\HHc_a}))
    & \le \mu^\ast(A \cup (B \cap \VVc_{k,\HHc_a \setminus \{h\}})) + \mu^\ast(B \cap (k \cdot \HHb + h)) \\
    & \le a + \mu^\ast(k \cdot \HHb + h) = a + \frac{1}{k} < a + b-a = b,
\end{split}
\end{equation*}
which is a contradiction. It follows that $a < \mu^\ast(A \cup (B \cap \VVc_{k,\HHc_a})) < b$.

At this point, denote by $\mathscr{M}_k$ the set of all subsets $\HHc$ of $\llb 0, k-1 \rrb$ containing $\HHc_a$ and such that $a < \mu^\ast(A \cup (B \cap \VVc_{k, \HHc})) < b$; we know from the above that $\HHc_a \in \mathscr{M}_k$. Then, let $\HHc_0$ be a set of maximal cardinality in $\mathscr{M}_k$ (which exists since $\mathscr{M}_k$, too, is finite and nonempty), and $\mathscr{S}_k$ the set of all subsets $\HHc$ of $\llb 0, k-1 \rrb$ containing $\HHc_0$ and such that $\mu^\ast(A \cup (B \cap \VVc_{k,\HHc})) \ge b$; we have from \eqref{equ:majorization} that $\llb 0, k-1 \rrb \in \mathscr{S}_k$, as $\mu^\ast(A \cup (B \cap \VVc_k)) \ge \mu^\ast(B) \ge b$.
Accordingly, let $\HHc_b$ be an element of minimal cardinality in $\mathscr{S}_k$  (which exists, once again, since $\mathscr{S}_k$ is finite and nonempty).

It is clear that $|\HHc_b| \ge 1 + |\HHc_0|$, because $\HHc_0 \subseteq \HHc_b$ and $\mu^\ast(\HHc_0) < b \le \mu^\ast(\HHc_b)$, and consequently $\HHc_0 \subsetneq \HHc_b$; we claim that $|\HHc_b| = |\HHc_0| + 1$. In fact, suppose to the contrary that $|\HHc_b| \ge 2 + |\HHc_0|$, and pick $h \in \HHc_b \setminus \HHc_0$. Then $\HHc_0 \subsetneq \HHc_b \setminus \{h\} \subsetneq \HHc_b$, so $\HHc_0$ being an element of maximal cardinality in $\mathscr{M}_k$ and $\HHc_b$ an element of minimal cardinality in $\mathscr{S}_k$ entail that $\HHc_b \setminus \{h\} \notin \mathscr{M}_k \cup \mathscr{S}_k$.

Thus $\mu^\ast(A \cup (B \cap \VVc_{k, \HHc_b \setminus \{h\}})) \le a$, which is however impossible by the same argument used above to prove that $\mu^\ast(A \cup (B \cap \VVc_{k, \HHc_a})) < b$, as it would imply (we omit some details) that
\begin{equation*}
b \le \mu^\ast(A \cup (B \cap \VVc_{k, \HHc_b})) \le \mu^\ast(A \cup (B \cap \VVc_{k, \HHc_b \setminus \{h\}})) + \mu^\ast(B \cap (k \cdot \HHb + h)) < b.
\end{equation*}
This completes the proof of the lemma, as it shows that, letting $h_0$ be the unique element in $\HHc_b \setminus \HHc_0$, we have $a < \mu^\ast(A \cup (B \cap \VVc_{k, \HHc_0})) < b \le \mu^\ast(A \cup (B \cap \VVc_{k,\HHc_0 \fixed[0.2]{\text{ }} \cup \fixed[0.2]{\text{ }} \{h_0\}}))$.
\end{proof}
We are now ready to prove the main result of the paper.
\begin{proof}[Proof of Theorem \ref{th:upperdarboux}]
Let $\mu^\ast$ be an upper quasi-density on $\HHb$, and fix $X, Y \in \PPc(\HHb)$, $X \subseteq Y$. If $\mu^\ast(Y) \le \mu^\ast(X)$, the conclusion is trivial, so assume in what follows that $\mu^\ast(X) < \mu^\ast(Y)$ and let $\xi \in {]\mu^\ast(X), \mu^\ast(Y)[}$ (the boundary cases are obvious).
\begin{claim*} There exist two sequences $(A_n)_{n \ge 1}$ and $(B_n)_{n \ge 1}$ of subsets of $\HHb$ for which:
\begin{enumerate}[label={\rm (\roman{*})}]
\item\label{item:recursive_(i)} $X \subseteq A_1 \subseteq \cdots \subseteq A_n \subseteq \cdots \subseteq B_n \subseteq \cdots \subseteq B_1 \subseteq Y$;
\item\label{item:recursive_(ii)} $\mu^\ast(A_n) < \xi \le \mu^\ast(B_n)$ for all $n \in \NNb^+$;
\item\label{item:recursive_(iii)} Given $n \in \NNb^+$, there exist $h, k \in \NNb$ such that $k \ge n$ and $B_n \setminus A_n \subseteq k \cdot \HHb + h$.
\end{enumerate}
\end{claim*}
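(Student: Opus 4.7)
The plan is to construct $(A_n)_{n \ge 1}$ and $(B_n)_{n \ge 1}$ recursively, letting Lemma~\ref{lem:increasing} do the real work at each step. The base case is immediate: put $A_1 := X$ and $B_1 := Y$, so that \ref{item:recursive_(i)} and \ref{item:recursive_(ii)} at $n = 1$ reduce to the standing assumptions $X \subseteq Y$ and $\mu^\ast(X) < \xi \le \mu^\ast(Y)$, while \ref{item:recursive_(iii)} holds with $k = 1$ and $h = 0$, since $B_1 \setminus A_1 \subseteq \HHb = 1 \cdot \HHb + 0$.

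For the inductive step, suppose $A_n \subseteq B_n$ have been built with $\mu^\ast(A_n) < \xi \le \mu^\ast(B_n)$. I would apply Lemma~\ref{lem:increasing} to the pair $(A_n, B_n)$ with $\mu^\ast(A_n)$ and $\xi$ playing the roles of $a$ and $b$, and with $k$ any integer satisfying both $k \ge n+1$ and $k > (\xi - \mu^\ast(A_n))^{-1}$. The lemma then yields $\HHc_0 \subseteq \llb 0, k-1 \rrb$ and $h_0 \in \llb 0, k-1 \rrb$ such that
$$
\mu^\ast(A_n) < \mu^\ast\bigl(A_n \cup (B_n \cap \VVc_{k,\HHc_0})\bigr) < \xi \le \mu^\ast\bigl(A_n \cup (B_n \cap \VVc_{k,\HHc_0 \cup \{h_0\}})\bigr).
$$
I would set $A_{n+1} := A_n \cup (B_n \cap \VVc_{k,\HHc_0})$ and $B_{n+1} := A_n \cup (B_n \cap \VVc_{k,\HHc_0 \cup \{h_0\}})$. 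Conditions \ref{item:recursive_(i)} and \ref{item:recursive_(ii)} at stage $n+1$ are then transparent from $A_n \subseteq B_n$, from $\HHc_0 \subseteq \HHc_0 \cup \{h_0\} \subseteq \llb 0, k-1 \rrb$, and from the displayed inequalities.

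The only condition requiring a small extra step is \ref{item:recursive_(iii)} at stage $n+1$, and this is where I anticipate the main (though still minor) technicality. I would first observe that $h_0 \notin \HHc_0$: otherwise $\VVc_{k,\HHc_0 \cup \{h_0\}} = \VVc_{k,\HHc_0}$, forcing $B_{n+1} = A_{n+1}$, which is incompatible with the strict inequality $\mu^\ast(A_{n+1}) < \mu^\ast(B_{n+1})$. Hence the residue class $k \cdot \HHb + h_0$ is disjoint from $\VVc_{k,\HHc_0}$, so a short unpacking of the definitions of $A_{n+1}$ and $B_{n+1}$ gives $B_{n+1} \setminus A_{n+1} \subseteq k \cdot \HHb + h_0$; since $k \ge n+1$ by construction, this is exactly \ref{item:recursive_(iii)}. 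Beyond this verification, there is no serious obstacle here, as the heavy lifting sits entirely inside Lemma~\ref{lem:increasing}, which appears to have been engineered with this very recursion in mind; the only care needed is to pick $k$ simultaneously large enough for the lemma's precondition and for clause \ref{item:recursive_(iii)}.
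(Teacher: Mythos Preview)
Your proof is correct and follows essentially the same route as the paper's: same base case, same appeal to Lemma~\ref{lem:increasing} with $a = \mu^\ast(A_n)$ and $b = \xi$, and the same definitions of $A_{n+1}$ and $B_{n+1}$. The only cosmetic difference is that you impose $k \ge n+1$ by fiat, whereas the paper derives it from the inductive hypothesis \ref{item:recursive_(iii)} at step $n$ (which forces $\mu^\ast(B_n) - \mu^\ast(A_n) \le \frac{1}{n}$, so that $k > (\xi - \mu^\ast(A_n))^{-1}$ already entails $k \ge n+1$); either device works.
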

\begin{proof}[Proof of the claim]
To begin, set $A_1:=X$ and $B_1:=Y$, by noting that $\mu^\star(X) < \xi\le \mu^\star(Y) \le 1 \le 1 + \mu^\ast(X)$ and $Y \setminus X \subseteq 1 \cdot \HHb + 0$. Next, fix $v \in \NNb^+$ and suppose for the sake of induction that we have already found subsets $A_1,\ldots,A_v$ and $B_1,\ldots,B_v$ of $\HHb$ such that $A_1 \subseteq \cdots \subseteq A_v \subseteq B_v \subseteq \cdots \subseteq B_1$ and conditions \ref{item:recursive_(ii)} and \ref{item:recursive_(iii)} hold true for $1 \le n \le v$.

Let $k > (\xi-\mu^\star(A_v))^{-1}$. Our assumptions give that $\mu^\ast(A_v) < \mu^\ast(B_v)$ and $B_v \setminus A_v \subseteq q \cdot \HHb + r$ for some $q \in \NNb^+$ and $r \in \NNb$ such that $q \ge v$. Accordingly, we get from axioms \ref{item:F3}-\ref{item:F4b} and Lemma \ref{lm:relaxed_(F2)_with_upper_arithmetic_bound} that
$$
\mu^\ast(B_v) \le \mu^\ast(A_v) + \mu^\ast(B_v \setminus A_v) \le \mu^\ast(A_v) + \mu^\ast(q \cdot \HHb + r) = \mu^\ast(A_v) + \frac{1}{q} \le \mu^\ast(A_v) + \frac{1}{v},
$$
which in turn implies that $k > (\mu^\ast(B_v) - \mu^\ast(A_v))^{-1} \ge v$, i.e. $k \ge v+1$.
On the other hand, it follows from Lemma \ref{lem:increasing} that there exist $\HHc_0 \subseteq \llb 0, k-1 \rrb$ and $h_0 \in \NNb$ such that
\begin{equation}
\label{equ:inductive_step_disug}
\mu^\ast(A_v \cup (B_v \cap \VVc_{k,\HHc_0})) < \xi \le \mu^\ast(A_v \cup (B_v \cap \VVc_{k,\HHc_0 \fixed[0.2]{\text{ }} \cup \fixed[0.2]{\text{ }} \{h_0\}})).
\end{equation}
Therefore, set $A_{v+1} := A_v \cup (B_v \cap \VVc_{k,\HHc_0})$ and $B_{v+1} := A_v \cup (B_v \cap \VVc_{k,\HHc_0 \fixed[0.2]{\text{ }} \cup \fixed[0.2]{\text{ }} \{h_0\}})$. Then, it is clear that $A_v \subseteq A_{v+1} \subseteq B_{v+1} \subseteq B_v$ and $B_{v+1} \setminus A_{v+1} \subseteq k\cdot \HHb+h_0$, which, putting it all together, is enough (by induction) to conclude, since $X \subseteq A_1$, $B_1 \subseteq Y$ and $k \ge v+1$.
\end{proof}
Now, let $(A_n)_{n \ge 1}$ and $(B_n)_{n \ge 1}$ be as in the above claim, and set $A := \bigcup_{n\ge 1}A_n$. For a fixed $n \in \NNb^+$, both $A\setminus A_n$ and $B_n\setminus A$ are then subsets of $B_n \setminus A_n$, which, by condition \ref{item:recursive_(iii)}, is in turn contained in $k \cdot \HHb + h$ for some $h \in \NNb$ and $k \ge n$.

Accordingly, we get from here, Lemma \ref{lm:relaxed_(F2)_with_upper_arithmetic_bound}, and axiom \ref{item:F4b} that $\mu^\star(A\setminus A_n)\le \frac{1}{n}$ and $\mu^\star(B_n\setminus A) \le \frac{1}{n}$ for all $n$, with the result that
$$
\mu^\star(A)\le \mu^\star(A_n)+\mu^\star(A\setminus A_n) \le \mu^\star(A_n)+\frac{1}{n}<\xi+\frac{1}{n}
$$
and
$$
\mu^\star(A) \ge \mu^\star(B_n)-\mu^\star(B_n\setminus A) \ge \mu^\star(B_n)-\frac{1}{n} \ge \xi-\frac{1}{n},
$$
where, along with the subadditivity of $\mu^\ast$, we have used that $\mu^\ast(A_n) < \xi \le \mu^\ast(B_n)$ by condition \ref{item:recursive_(ii)}. Hence, $|\mu^\star(A)-\xi|\le \frac{1}{n}$ for every $n \in \NNb^+$, which is possible if and only if $\mu^\ast(A) = \xi$, and completes the proof when considering that $X \subseteq A \subseteq Y$.
\end{proof}

\section{Closing remarks}
\label{sec:final_remarks}
%
First, we show that the hypotheses of Theorem \ref{th:upperdarboux} are sharp, in the sense that, if we try to extend the theorem from upper quasi-densities to a larger class $\mathcal{C}$ of functions $\mathcal P(\HHb) \to \RRb$ by dropping either of conditions \ref{item:F3}-\ref{item:F5}, then the result breaks down in a somewhat dramatic way, since there exists a function $f \in \mathcal{C}$ such that its image is ``as far as possible'' from being the entire interval $[0,1]$, even under the additional assumptions that $f$ satisfies axiom \ref{item:F2} and $f(X) \le 1$ for each $X\subseteq \HHb$ (in particular, $f$ cannot have the weak Darboux property). This is the object of the following three examples, where for $X \subseteq \HHb$ we set $X^+ := X \cap \NNb^+$.
\begin{example}
\label{exa:(i)_without_(F3)}
Let $f$ be the function $\PPc(\HHb) \to \RRb$ defined as follows: Given $X \subseteq \HHb$, we assume $f(X) := 0$ if $|X^+| < \infty$, otherwise $f(X) := \sup_{n \ge 1} (x_{n+1} - x_n)^{-1}$,
where $(x_n)_{n \ge 1}$ is the natural enumeration of $X^+$. Of course, $f(\emptyset) = 0$, and it is straightforward that $f$ satisfies conditions \ref{item:F1}, \ref{item:F2} and \ref{item:F4b}, and $\imag(f) = \{0\} \cup \{\nicefrac{1}{k}: k \in \NNb^+\}$.
\end{example}
\begin{example}
\label{exa:(i)_without_(F4)}
Let $f$ be a monotone and subadditive function $\PPc(\HHb) \to \RRb$ such that $f(\HHb) = 1$ and $f(\emptyset) = 0$ (e.g., $f$ may be an upper density on $\HHb$), and let $f$ be the function
$\PPc(\HHb) \to \RRb$ mapping a set $X \subseteq \HHb$ to $1$ if $f(X) > 0$, and to $0$ otherwise, cf. \cite[Example 1]{LT}.

It is seen that $f$ satisfies axioms \ref{item:F1}-\ref{item:F3} and \ref{item:F5}, where we use, in particular, that $f(X \cup Y) \le f(X) + f(Y)$ for all $X, Y \subseteq \HHb$, and hence $f(X \cup Y) = 0$ whenever $f(X) = f(Y) = 0$. On the other hand, it is evident that $f(\emptyset) = 0$ and the image of $f$ is the $2$-element set $\{0, 1\}$.
\end{example}
\begin{example}
\label{exa:(i)_without_(F5)}
Let $f$ be the function $f: \PPc(\HHb) \to \RRb: X \mapsto (\inf(X^+))^{-1}$,
where $\inf(\emptyset) := \infty$ and $\frac{1}{\infty} := 0$. It is immediate that $f$ satisfies conditions \ref{item:F1}-\ref{item:F4}, cf. \cite[Example 3]{LT}. In addition, $f(\emptyset) = 0$ and the image of $f$ is the set $\{0\} \cup \{\nicefrac{1}{k}: k \in \mathbf N^+\}$.
\end{example}
Secondly, we may notice a kind of asymmetry in the definition of the strong Darboux property. More precisely, we say that $f$ has the \textit{symmetric \textup{(}strong\textup{)} Darboux property} if, for all $X, Y \subseteq \HHb$ with $X \subseteq Y$ and every $a \in [0,1]$, there exists a set $A$ with $X \subseteq A \subseteq Y$ and $f(A) = af(Y) + (1-a) f(X)$; this is the same as the strong Darboux property when $f$ is monotone, but is more general otherwise, as it does not require any longer that $f(X) \le f(Y)$.

Thus, we question if Theorem \ref{th:upperdarboux} can be extended to prove that upper quasi-densities satisfy the symmetric Darboux property. Yet, that is not the case, as shown by the following example.

\begin{example}
Let $\theta^\ast$ be the function
$$
\theta^\ast: \PPc(\HHb) \to \RRb: X \mapsto \left\{
\begin{array}{ll}
\!\! \dd^\ast(X) & \text{if } \iota(X) \le 3 \\
\!\! \frac{3}{4}(\iota(X)-1)\fixed[0.1]{\text{ }}\dd^\ast(X) & \text{if } 4 \le \iota(X) < \infty \\
\!\! 0 & \text{otherwise}
\end{array}
\!\!\right.,
$$
where $\dd^\ast$ is the upper asymptotic density on $\ZZb$ and $\iota$ the function $\PPc(\HHb) \to \NNb^+ \cup \{\infty\}$ taking a set $X \subseteq \HHb$ to the infimum of the integers $n \ge 1$ for which there is $Y \subseteq \HHb$ such that
$\dd^\ast(Y) \ge \frac{1}{n}$ and $|(q \cdot Y + r) \setminus X| < \infty$ for some $q \in \NNb^+$ and $r \in \ZZb$,
with the convention that $\inf(\emptyset) := \infty$.

We have from \cite[Lemmas 2 and 3]{LT} that $\theta^\ast$ is a non-monotone upper quasi-density, so we just need to prove that $\theta^\ast$ does not have the symmetric Darboux property.

For this, set $X := \bigcup_{n \ge 2} \!\left\llb \frac{1}{4} (2n-1)! + \frac{3}{4} (2n)! + 1, (2n)! + 1 \right\rrb$ and $Y := X \cup (4\cdot \HHb)$; note that we are intentionally starting with $n = 2$ in the definition of $X$, so as to discard the integers $\le 3$
and have that $Y$ can be written as the union of $4 \cdot \HHb$ and $\bigcup_{h=1}^3 (X \cap (4 \cdot \HHb + h))$, which otherwise would be false for $\HHb = \NNb^+$. Then, using that $\dd^\ast$ satisfies \ref{item:F3} and \ref{item:F4b} yields, together with \cite[Lemma 1]{LT}, that $\dd^\ast(X) = \frac{1}{4}$ and
$$
\dd^\star(Y) \le \dd^\star(4\cdot \HHb)+\sum_{h=1}^3 \dd^\star(X \cup (4\cdot \HHb+h))=\frac{1}{4}+\frac{3}{16} = \frac{7}{16}.
$$

On the other hand, it follows from the claim established within the proof of \cite[Lemma 3]{LT} and the observation made on the first line of the same that $\iota(X) = 4$ and $\iota(Y) = 1$.

Now, consider a set $A$ such that $X \subseteq A \subseteq Y$. By \cite[Lemma 3]{LT}, we have $\iota(A) \le \iota(X) = 4$, and so there are two cases: Either $\iota(A) \le 3$, and then $\theta^\star(A)=\dd^\star(A) \le \dd^\star(Y) \le \frac{7}{16} < \frac{1}{2}$, or $\iota(A) = 4$, and then
$\theta^\star(A) = \frac{9}{4}\fixed[0.1]{\text{ }}\dd^\ast(A) \ge \frac{9}{4} \dd^\star(X)= \frac{9}{16} > \frac{1}{2}$.
This shows that $\theta^\ast(A)$ cannot attain the value $\frac{1}{2}$, so $\theta^\ast$ does not have the symmetric Darboux property.
\end{example}

%
Lastly, suppose $\mu^\ast$ is an upper [quasi-]density on $\HHb$ with associated lower [quasi-]density $\mu_\ast$, and denote by $\mu$ the partial function $\PPc(\HHb) \pto \RRb$ obtained by restriction of $\mu^\ast$ to the set $\{X\subseteq \HHb: \mu^\ast(X)=\mu_\ast(X)\}$. We refer to $\mu$ as the \textit{\textup{[}quasi-\textup{]}density} induced by $\mu^\ast$.

In the light of our main result, it is natural to ask whether $\mu$ must have the strong Darboux property: It occurs that the answer is affirmative if $\mu^\star$ is additive, as in that case $\dom(\mu)$ is just $\PPc(\HHb)$, and $\mu$ has the strong Darboux property by Theorem \ref{th:upperdarboux}. (The existence of additive upper densities is provable in ZFC, but independent of ZF, see \cite[Remark 3]{LT}.) However, we ignore what happens in general, so we raise the following question, which has a positive answer in the case when $\mu^\ast$ is either the upper Buck density or the upper Banach density, see \cite[Theorem 2.1]{PaSa} and \cite[Theorem 4.2]{GLS} respectively.
\begin{question*}
\label{quest:strong_darboux_of_induced_densities}
Does a [quasi-]density $\mu$ need to have the strong Darboux property? If not, what about the weak Darboux property?
\end{question*}
A few more questions along these lines can be found in \cite[\S{} 8]{LT}, see in particular Question 3 therein, and involve a kind of ``joint weak Darboux property'' in the spirit of \cite[Theorem 2]{GMT}, where the focus is, however, on (upper and lower) weighted densities.
\section*{Acknowledgments}
P.L. was supported by a PhD scholarship from Universit\`a ``Luigi Bocconi'', and S.T. by NPRP grant No. [5-101-1-025] from the Qatar National Research Fund (a member of Qatar Foundation) and partly by the French ANR Project ANR-12-BS01-0011.
The statements made herein are solely the responsibility of the authors.

The authors are grateful to Ladislav Mi\v{s}\'ik (University of Ostrava, SK) for having suggested Example \ref{exa:(i)_without_(F3)}, to Carlo Sanna (Universit\`a di Torino, IT) for many useful comments, and to Nordine Mir (Texas A\&M University at Qatar, Q) for fruitful discussions.

\end{document}